\documentclass[11pt,reqno]{amsart}

\setlength{\textwidth}{6.3in} \setlength{\textheight}{9.25in}
\setlength{\evensidemargin}{0in} \setlength{\oddsidemargin}{0in}
\setlength{\topmargin}{-.3in}

\usepackage{graphicx}

\usepackage{xspace}
\usepackage{amsmath,amsthm,amsfonts,amssymb,latexsym,mathrsfs,color,extarrows}
\usepackage{hyperref}

\newtheorem{theorem}{Theorem}
\newtheorem{corollary}[theorem]{Corollary}

\newtheorem{conjecture}[theorem]{Conjecture}

\newtheorem{lemma}[theorem]{Lemma}
\newtheorem{definition}[theorem]{Definition}
\newtheorem{example}[theorem]{Example}
\newtheorem{problem}[theorem]{Problem}

\newcommand{\CLS}{{\rm CLS\,}}

\newcommand{\MM}{{\rm M\,}}
\newcommand{\Jc}{{\rm Jc\,}}
\newcommand{\Lc}{{\rm Lc\,}}

\newcommand{\LS}{{\rm LS\,}}

\newcommand{\JS}{{\rm JS\,}}

\newcommand{\cyc}{{\rm cyc\,}}

\newcommand{\msn}{\mathfrak{S}_n}
\newcommand{\CLSS}{\mathcal {CLS}}
\newcommand{\LSS}{\mathcal{LS}}

\DeclareMathOperator{\N}{\mathbb{N}}

\newcommand{\stirling}[2]{\genfrac{[}{]}{0pt}{}{#1}{#2}}
\newcommand{\Stirling}[2]{\genfrac{\{}{\}}{0pt}{}{#1}{#2}}

\linespread{1.25}

\title[Legendre-Stirling numbers]{On certain combinatorial expansions of the Legendre-Stirling numbers}
\author[S.-M.~Ma]{Shi-Mei Ma}
\address{School of Mathematics and Statistics,
        Northeastern University at Qinhuangdao,
         Hebei 066000, P.R. China}
\email{shimeimapapers@163.com (S.-M. Ma)}
\author[J.~Ma]{Jun Ma}
\address{Department of mathematics, Shanghai jiao tong university, Shanghai, P.R. China}
\email{majun904@sjtu.edu.cn(J.~Ma)}
\author[Y.-N. Yeh]{Yeong-Nan Yeh}
\address{Institute of Mathematics,
        Academia Sinica, Taipei, Taiwan}
\email{mayeh@math.sinica.edu.tw (Y.-N. Yeh)}
\subjclass[2010]{Primary 05A05; Secondary 05A15}
\begin{document}

\maketitle
\begin{abstract}
The Legendre-Stirling numbers of the second kind were introduced by Everitt et al. in the spectral
theory of powers of the Legendre differential expressions.
In this paper, we provide a combinatorial code for Legendre-Stirling set partitions. As an application, we
obtain combinatorial expansions of the Legendre-Stirling numbers of both kinds.
Moreover, we present grammatical descriptions of the Jacobi-Stirling numbers of both kinds.
\bigskip

\noindent{\sl Keywords}: Legendre-Stirling numbers; Jacobi-Stirling numbers; Context-free grammars
\end{abstract}
\date{\today}
\section{Introduction}

Let $\ell[y](t)=-(1-t^2)y''(t)+2ty'(t)$ be the Legendre differential operator.
Then the Legendre polynomial $y(t)=P_n(t)$ is an eigenvector for the differential operator $\ell$ corresponding to $n(n+1)$, i.e.,
$\ell[y](t)=n(n+1)y(t)$.
Following Everitt et al.~\cite{Everitt02}, for $n\in \N$, the {\it Legendre-Stirling numbers $\LS(n,k)$ of the second kind}
appeared originally as the coefficients in the expansion of the $n$-th composite power of $\ell$, i.e., 
\begin{align*}
\ell^n[y](t)&=\sum_{k=0}^n(-1)^k\LS(n,k)((1-t^2)^ky^{(k)}(t))^{(k)}.
\end{align*}
For each $k\in \N$, Everitt et al.~\cite[Theorem 4.1)]{Everitt02} obtained that
\begin{equation}\label{vertical-GF}
\prod_{r=1}^k\frac{1}{1-r(r+1)x}=\sum_{n=0}^\infty\LS(n,k)x^{n-k},~\left(|x| \leq \frac{1}{k(k+1)} \right),
\end{equation}
$$\LS(n,k)=\sum_{r=0}^k(-1)^{r+k}\frac{(2r+1)(r^2+r)^n}{(r+k+1)!(k-r)!}.$$
According to~\cite[Theorem 5.4]{Andrews11}, the numbers $\LS(n,k)$
have the following horizontal generating function
\begin{equation}\label{xnJsnkz01}
x^n=\sum_{k=0}^n\LS(n,k)\prod_{i=0}^{k-1}(x-i(1+i)).
\end{equation}
It follows from~\eqref{xnJsnkz01} that
the numbers $\LS(n,k)$ satisfy the recurrence relation
\begin{equation*}
\LS(n,k)=\LS(n-1,k-1)+k(k+1)\LS(n-1,k).
\end{equation*}
with the initial conditions $\LS(n,0)=\delta_{n,0}$ and $\LS(0,k)=\delta_{0,k}$, where $\delta_{i,j}$ is the Kronecker's symbol.

By using~\eqref{vertical-GF}, Andrews et al.~\cite[Theorem~5.2]{Andrews11} derived that the numbers $\LS(n,k)$
satisfy the vertical recurrence relation $$\LS(n,j)=\sum_{k=j}^n\LS(k-1,j-1)(j(j+1))^{n-k}.$$
A particular values of $\LS(n,k)$ is provided at the end of~\cite{Andrews09}:
\begin{equation}\label{LSnk01}
\LS(n+1,n)=2\binom{n+2}{3}.
\end{equation}
In~\cite[Eq.~(19)]{Egge10}, Egge found that
\begin{align*}
\LS(n+2,n)&=40\binom{n+2}{6}+72\binom{n+2}{5}+36\binom{n+2}{4}+4\binom{n+2}{3}.
\end{align*}
Using the triangular recurrence relation $\binom{n+1}{k}=\binom{n}{k}+\binom{n}{k-1}$, we get
\begin{equation}\label{LSnk02}
\LS(n+2,n)=40\binom{n+3}{6}+32\binom{n+3}{5}+4\binom{n+3}{4}.
\end{equation}
Egge~\cite[Theorem 3.1]{Egge10} showed that for $k\geq 0$, the quantity $\LS(n+k,n)$ is a polynomial of degree $3k$ in $n$ with leading coefficient $\frac{1}{3^kk!}$.

This paper is a continuation of~\cite{Egge10}, and it is motivated by the following problem.
\begin{problem}\label{problem01}
Let $k$ be a given nonnegative integer. Could the numbers $\LS(n+k,n)$ be expanded
in the binomial basis?
\end{problem}

The paper is organized as follows.
In Section~\ref{Section02}, by introducing a combinatorial code for Legendre-Stirling set partitions, we give a solution of Problem~\ref{problem01}.
Moreover, we get a combinatorial expansion of the Legendre-Stirling numbers of the first kind.
In Section~\ref{Section03},
we present grammatical interpretations of Jacobi-Stirling numbers of both kinds.
\section{Legendre-Stirling set partitions}\label{Section02}
\hspace*{\parindent}

The combinatorial interpretation of the Legendre-Stirling numbers $\LS(n,k)$ of the second kind was first given by
Andrews and Littlejohn~\cite{Andrews09}. For $n\geq 1$, let $\MM_n$ denote the multiset $\{1,\overline{1},2,\overline{2},\ldots,n,\overline{n}\}$,
in which we have one unbarred copy and one barred copy of each integer $i$, where $1\leq i\leq n$.
Throughout this paper, we always assume that the elements of $\MM_n$ are ordered by
$$\overline{1}=1<\overline{2}=2<\cdots <\overline{n}=n.$$
A {\it Legendre-Stirling set partition} of $\MM_n$ is a set partition of $\MM_n$ with $k+1$
blocks $B_0,B_1,\ldots, B_k$ and with the following rules:
\begin{itemize}
  \item [\rm ($r_1$)] The `zero box' $B_0$ is the only box that may be empty and it may not contain
both copies of any number;
  \item [\rm ($r_2$)] The `nonzero boxes' $B_1,B_2,\ldots,B_k$ are indistinguishable and each is non-empty. For any $i\in [k]$, the box $B_i$
contains both copies of its smallest element and does not contain both copies of any other number.
\end{itemize}
Let $\LSS(n,k)$ denote the set of Legendre-Stirling set partitions of $\MM_n$ with one zero box and $k$ nonzero boxes.
The {\it standard form} of an element of $\LSS(n,k)$ is written as
$$\sigma=B_1B_2\cdots B_kB_0,$$ where $B_0$ is the zero box and the minima of $B_i$ is less than that of $B_j$ when $1\leq i<j\leq k$. Clearly, the minima of $B_1$ are $1$ and $\overline{1}$.
Throughout this paper we always write $\sigma\in \LSS(n,k)$ in the
standard form. As usual,
we let angle bracket symbol $<i,j,\ldots>$ and curly bracket symbol $\{k,\overline{k},\ldots\}$ denote the zero box and nonzero box, respectively.
In particular, let $<>$ denote the empty zero box.
For example, $\{1,\overline{1},3\}\{2,\overline{2}\}<\overline{3}>\in\LSS(3,2)$.
A classical result of Andrews and Littlejohn~\cite[Theorem 2]{Andrews09} says that $$\LS(n,k)=\#\LSS(n,k).$$

We now provide a combinatorial code for Legendre-Stirling partitions ($\CLS$-sequence for short).
\begin{definition}
We call $Y_n=(y_1,y_2,\ldots,y_n)$ a {\it $\CLS$-sequence} of length $n$ if $y_1=X$ and $$y_{k+1}\in \{X,A_{i,j},B_s,\overline{B}_s,1\leq i,j,s\leq n_x(Y_k),i\neq j\}\quad\textrm{for $k=1,2,\ldots,n-1$},$$
where $n_x(Y_k)$ is the number of the symbol $X$ in $Y_k=(y_1,y_2,\ldots,y_k)$.
\end{definition}

For example, $(X,X,A_{1,2})$ is a $\CLS$-sequence, while $(X,X,A_{1,2},B_3)$ is not since $y_4=B_3$ and $3>n_x(Y_3)=2$.
Let $\CLSS_n$ denote the set of $\CLS$-sequences of length $n$.

The following lemma is a fundamental result.
\begin{lemma}\label{Lemma01}
For $n\geq 1$, we have
$\LS(n,k)=\#\{Y_n\in\CLSS_n\mid n_x(Y_n)=k\}$.
\end{lemma}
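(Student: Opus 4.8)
The plan is to exhibit an explicit bijection
\[
\Phi\colon \CLSS_n \longrightarrow \bigcup_{k\ge 1}\LSS(n,k)
\]
that carries each $\CLS$-sequence with exactly $k$ symbols $X$ to a Legendre-Stirling set partition of $\MM_n$ with $k$ nonzero boxes. Once $\Phi$ is shown to be such a statistic-preserving bijection, the lemma follows immediately from the Andrews--Littlejohn identity $\LS(n,k)=\#\LSS(n,k)$.

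I would define $\Phi$ by reading $Y_n=(y_1,\dots,y_n)$ from left to right and building a partition of $\MM_n$ incrementally, where the entry $y_m$ prescribes how the pair $m,\overline{m}$ is inserted. The initial entry $y_1=X$ opens the block $B_1=\{1,\overline{1}\}$. For $m\ge 2$, write $c=n_x(Y_{m-1})$ for the number of nonzero blocks created so far, and interpret $y_m$ as follows: if $y_m=X$, open a new nonzero block $\{m,\overline{m}\}$; if $y_m=A_{i,j}$, put $m$ into $B_i$ and $\overline{m}$ into $B_j$; if $y_m=B_s$, put $m$ into the zero box $B_0$ and $\overline{m}$ into $B_s$; if $y_m=\overline{B}_s$, put $\overline{m}$ into $B_0$ and $m$ into $B_s$. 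The defining constraint $1\le i,j,s\le c$ with $i\ne j$ guarantees that every referenced block already exists.

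Next I would verify that $\Phi(Y_n)\in\LSS(n,k)$ with $k=n_x(Y_n)$. Because the integers $1,2,\dots,n$ are processed in increasing order and a block is opened precisely at a symbol $X$, the block created at the $s$-th $X$ has smallest element equal to that integer; hence the order in which blocks are opened coincides with the standard-form order by minima, and each nonzero block permanently retains both copies of its minimum while every later insertion deposits at most one copy of an integer into any single block. This is exactly rules~($r_1$) and~($r_2$), and the number of nonzero blocks equals the number of symbols $X$. For bijectivity I would describe the inverse: given $\sigma\in\LSS(n,k)$ in standard form, process $m=1,2,\dots,n$ and record, for each $m$, the unique way the two copies $m,\overline{m}$ are distributed among $B_0,B_1,\dots$, which reconstructs $y_m$; here one uses that the blocks of $\sigma$ are canonically indexed by their minima, so the indices $i,j,s$ are unambiguous.

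The only delicate point is the bookkeeping in the surjectivity direction, namely checking that the standard-form indexing of blocks matches the index set available to $y_m$ at each step, i.e.\ that $c=n_x(Y_{m-1})$ equals the number of blocks whose minimum is $<m$. I expect this to be the main obstacle, although it is forced by the monotone processing order. As an independent confirmation, the bijection specializes the known recurrence: a sequence counted by $f(n,k):=\#\{Y_n\in\CLSS_n\mid n_x(Y_n)=k\}$ ends either in $X$, contributing $f(n-1,k-1)$, or in one of the $k(k+1)$ stationary symbols --- the $k(k-1)$ choices $A_{i,j}$ together with the $k$ choices $B_s$ and the $k$ choices $\overline{B}_s$ --- contributing $k(k+1)f(n-1,k)$; with $f(1,1)=1$ this is precisely $\LS(n,k)=\LS(n-1,k-1)+k(k+1)\LS(n-1,k)$, so the lemma also follows by a direct induction on $n$.
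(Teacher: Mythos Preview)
Your proof is correct and follows essentially the same route as the paper: both build a bijection between $\CLS$-sequences and Legendre--Stirling set partitions by reading the sequence left to right and letting $y_m$ dictate how $m,\overline{m}$ are inserted (new block for $X$, two distinct nonzero blocks for $A_{i,j}$, one nonzero block and the zero box for $B_s$ or $\overline{B}_s$). The only differences are cosmetic---your roles of $B_s$ and $\overline{B}_s$ are swapped relative to the paper's convention, and you add an independent recurrence check that the paper omits; neither affects the argument.
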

\begin{proof}
Let $$\CLSS(n,k)=\{Y_n\in\CLSS_n\mid n_x(Y_n)=k\}.$$
Now we start to construct a bijection, denoted by $\Phi$, between $\LSS(n,k)$ and $\CLSS(n,k)$.
When $n=1$, we have $y_1=X$. Set $\Phi(Y_1)=\{1,\overline{1}\}<>$. This gives a bijection from $\CLSS(1,1)$ to $\LSS(1,1)$.
Let $n=m$. Suppose $\Phi$ is a bijection from $\CLSS(n,k)$ to $\CLSS(n,k)$ for all $k$.
Consider the case $n=m+1$. Let $$Y_{m+1}=(y_1,y_2,\ldots,y_m,y_{m+1})\in \CLSS_{m+1}.$$
Then $Y_m=(y_1,y_2,\ldots,y_m)\in \CLSS(m,k)$ for some $k$.
Assume $\Phi(Y_m)\in\LSS(m,k)$. Consider the following three cases:
\begin{enumerate}
  \item [\rm ($i$)] If $y_{m+1}=X$, then let $\Phi(Y_{m+1})$ be obtained from $\Phi(Y_m)$ by putting the box $\{m+1,\overline{m+1}\}$ just before the zero box. In this case, $\Phi(Y_{m+1})\in\LSS(m+1,k+1)$.
  \item [\rm ($ii$)] If $y_{m+1}=A_{i,j}$, then let $\Phi(Y_{m+1})$ be obtained from $\Phi(Y_m)$ by inserting the entry $m+1$ to the $i$th nonzero box and inserting the entry $\overline{m+1}$ to the $j$th nonzero box. In this case, $\Phi(Y_{m+1})\in\LSS(m+1,k)$.
  \item [\rm ($iii$)] If $y_{m+1}=B_{s}$ (resp. $y_{m+1}=\overline{B}_{s}$), then let $\Phi(Y_{m+1})$ be obtained from $\Phi(Y_m)$ by inserting the entry $m+1$ (resp. $\overline{m+1}$) to the $s$th nonzero box and inserting the entry $\overline{m+1}$ (resp. $m+1$) to the zero box. In this case, $\Phi(Y_{m+1})\in\LSS(m+1,k)$.
\end{enumerate}
After the above step, it is clear that the obtained $\Phi(Y_{m+1})$ is in standard form.
By induction, we see that $\Phi$ is the desired bijection from $\CLSS(n,k)$ to $\CLSS(n,k)$,
which also gives a constructive proof of Lemma~\ref{Lemma01}.
\end{proof}

\begin{example}
Let $Y_5=(X,X,A_{2,1},B_2,\overline{B}_1)$.
The correspondence between $Y_5$ and $\Phi(Y_5)$ is built up as follows:
\begin{align*}
X&\Leftrightarrow \{1,\overline{1}\}<>;\\
X&\Leftrightarrow  \{1,\overline{1}\}\{2,\overline{2}\}<>;\\
A_{2,1}&\Leftrightarrow  \{1,\overline{1},\overline{3}\}\{2,\overline{2},3\}<>;\\
B_2&\Leftrightarrow  \{1,\overline{1},\overline{3}\}\{2,\overline{2},3,4\}<\overline{4}>;\\
\overline{B}_1&\Leftrightarrow  \{1,\overline{1},\overline{3},\overline{5}\}\{2,\overline{2},3,4\}<\overline{4},5>.
\end{align*}
\end{example}

As an application of the $\CLS$-sequences, we present the following result.
\begin{lemma}\label{lemma02}
Let $k$ be a given positive integer. Then for $n\geq 1$,
we have
\begin{equation}\label{LSNK01}
\LS(n+k,n)=2^k\sum_{t_k=1}^n\binom{t_k+1}{n}\sum_{t_{k-1}=1}^{t_k}\binom{t_{k-1}+1}{2}\cdots \sum_{t_2=1}^{t_3}\binom{t_2+1}{2}\sum_{t_1=1}^{t_2}\binom{t_1+1}{2}.
\end{equation}
\end{lemma}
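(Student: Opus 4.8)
The plan is to count the right-hand side of Lemma~\ref{Lemma01} directly. By that lemma, $\LS(n+k,n)=\#\CLSS(n+k,n)$, where $\CLSS(n+k,n)$ consists of the $\CLS$-sequences $Y_{n+k}=(y_1,\ldots,y_{n+k})$ having exactly $n$ symbols equal to $X$; equivalently, exactly $k$ of the entries are of type $A_{i,j}$, $B_s$ or $\overline{B}_s$. So I will enumerate these sequences by stratifying according to how the $n$ copies of $X$ and the $k$ ``non-$X$'' entries are interleaved.

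First I would record the local weight of a non-$X$ entry. Suppose the $j$-th non-$X$ entry (read from the left) is preceded by exactly $t_j$ copies of $X$, so that $n_x$ of the prefix before it equals $t_j$. By the definition of a $\CLS$-sequence the admissible symbols at that slot are $A_{i,i'}$ with $1\le i,i'\le t_j$ and $i\ne i'$, together with $B_s$ and $\overline{B}_s$ for $1\le s\le t_j$. Counting these gives $t_j(t_j-1)+t_j+t_j=t_j(t_j+1)=2\binom{t_j+1}{2}$ choices, independently of the choices made at the other non-$X$ slots. This uniform local factor $2\binom{t_j+1}{2}$ is the combinatorial heart of the statement, and extracting the constant $2$ from each of the $k$ slots produces the global prefactor $2^k$.

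Next I would parametrize the interleavings. Writing $t_j$ for the number of $X$'s preceding the $j$-th non-$X$ entry, the constraint $y_1=X$ forces $t_1\ge 1$, the copies of $X$ accumulate so that $t_1\le t_2\le\cdots\le t_k$, and there are only $n$ copies of $X$ in all, whence $t_k\le n$. Conversely, every weakly increasing sequence $1\le t_1\le t_2\le\cdots\le t_k\le n$ specifies a unique interleaving pattern (place $t_1$ copies of $X$, then a non-$X$ slot, then $t_2-t_1$ copies of $X$, then a non-$X$ slot, and so on, ending with $n-t_k$ copies of $X$), and the first entry is automatically $X$ because $t_1\ge 1$. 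Thus the interleaving patterns are in bijection with such sequences, and the symbol-choices at distinct slots are independent.

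Combining the two steps, I would sum the product of the local weights over all admissible patterns:
\begin{equation*}
\LS(n+k,n)=\sum_{1\le t_1\le\cdots\le t_k\le n}\ \prod_{j=1}^{k}2\binom{t_j+1}{2}=2^k\sum_{t_k=1}^n\binom{t_k+1}{2}\sum_{t_{k-1}=1}^{t_k}\binom{t_{k-1}+1}{2}\cdots\sum_{t_1=1}^{t_2}\binom{t_1+1}{2},
\end{equation*}
where the nested sums merely encode the condition $1\le t_1\le\cdots\le t_k\le n$. This is the asserted identity~\eqref{LSNK01}; as a sanity check, for $k=1$ the inner sum telescopes by the hockey-stick identity to $\binom{n+2}{3}$, recovering~\eqref{LSnk01}. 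The only delicate point is the verification that the local weight is exactly $2\binom{t_j+1}{2}$ and that it depends only on $t_j$; once the independence of the slot-choices and the bijection with weakly increasing sequences are in place, the nested-sum form is immediate.
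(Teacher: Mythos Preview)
Your proof is correct and follows essentially the same approach as the paper: both invoke Lemma~\ref{Lemma01}, parametrize the positions of the $k$ non-$X$ entries by a weakly increasing sequence $1\le t_1\le\cdots\le t_k\le n$ recording how many $X$'s precede each one, compute the local weight at each such slot as $t_j(t_j-1)+2t_j=2\binom{t_j+1}{2}$, and multiply and sum. Your write-up is in fact slightly more careful in justifying the bijection between interleaving patterns and weakly increasing sequences (and you tacitly correct the evident typo $\binom{t_k+1}{n}$ in the displayed formula to $\binom{t_k+1}{2}$).
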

\begin{proof}
It follows from Lemma~\ref{Lemma01} that
$$\LS(n+k,n)=\#\{Y_{n+k}\in\CLSS_{n+k}\mid n_x(Y_{n+k})=n\}.$$
Let $Y_{n+k}=y_1y_2\cdots y_{n+k}$ be a given element in $\CLSS_{n+k}$. Since $n_x(Y_{n+k})=n$,
it is natural to assume that $y_i=X$ except $i=t_1+1,t_2+2,\cdots, t_k+k$.
Let $\sigma$ be the corresponding Legendre-Stirling partition of $Y_{n+k}$.
For $1\leq \ell\leq k$, consider the value of $y_{t_\ell+\ell}$. Note that the number of the symbol $X$ before $y_{t_\ell+\ell}$ is $t_\ell$.
Let $\widehat{\sigma}$ be the corresponding Legendre-Stirling partition of $y_1y_2\cdots y_{t_\ell+\ell-1}$.
Now we insert $y_{t_\ell+\ell}$.
We distinguish two cases:
\begin{enumerate}
  \item [\rm ($i$)] If $y_{t_\ell+\ell}=A_{i,j}$, then we should insert the entry $t_\ell+\ell$ to the $i$th nonzero box of $\widehat{\sigma}$ and insert $\overline{t_\ell+\ell}$ to the
  $j$th nonzero box. This gives $2\binom{t_\ell}{2}$ possibilities, since $1\leq i,j\leq t_\ell$ and $i\neq j$.
  \item [\rm ($ii$)] If $y_{t_\ell+\ell}=B_{s}$ (resp. $y_{t_\ell+\ell}=\overline{B}_{s}$), then we should insert the entry $t_\ell+\ell$ (resp. $\overline{t_\ell+\ell}$) to the $s$th nonzero box of $\widehat{\sigma}$ and insert $\overline{t_\ell+\ell}$ (resp. ${t_\ell+\ell}$) to the
  zero box. This gives $2\binom{t_\ell}{1}$ possibilities, since $1\leq s\leq t_\ell$.
\end{enumerate}
Therefore, there are exactly $2\binom{t_\ell}{2}+2\binom{t_\ell}{1}=2\binom{t_\ell+1}{2}$ Legendre-Stirling partitions of $\MM_{t_\ell+\ell}$ can be generated from $\widehat{\sigma}$ by inserting the entry $y_{t_\ell+\ell}$. Note that $1\leq t_{j-1}\leq t_j\leq n$ for $2\leq j\leq k$. Applying the product rule for counting, we immediately get~\eqref{LSNK01}.
\end{proof}

The following simple result will be used
in our discussion.
\begin{lemma}\label{lemma03}
Let $a$ and $b$ be given integers. Then
\begin{equation*}
\binom{x-b}{2}\binom{x}{a}=\binom{a+2}{2}\binom{x}{a+2}+(a+1)(a-b)\binom{x}{a+1}+\binom{a-b}{2}\binom{x}{a}.
\end{equation*}
In particular,
\begin{equation*}
\binom{x-1}{2}\binom{x}{a}=\binom{a+2}{2}\binom{x}{a+2}+(a^2-1)\binom{x}{a+1}+\binom{a-1}{2}\binom{x}{a}.
\end{equation*}
\end{lemma}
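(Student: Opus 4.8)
The plan is to regard the claimed identity as an equality of polynomials in $x$ and to expand the left-hand product in the binomial basis $\left\{\binom{x}{j}\right\}_{j\ge 0}$. The engine of the proof is the single elementary relation
\begin{equation*}
(x-c)\binom{x}{a}=(a+1)\binom{x}{a+1}+(a-c)\binom{x}{a},
\end{equation*}
valid for any constant $c$, which I would deduce from the standard identity $(a+1)\binom{x}{a+1}=(x-a)\binom{x}{a}$ by writing $x-c=(x-a)+(a-c)$.

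First I would rewrite $\binom{x-b}{2}=\tfrac12(x-b)(x-b-1)$, so that the left-hand side becomes $\tfrac12(x-b)(x-b-1)\binom{x}{a}$. Applying the elementary relation with $c=b+1$ gives $(x-b-1)\binom{x}{a}=(a+1)\binom{x}{a+1}+(a-b-1)\binom{x}{a}$. Multiplying through by $(x-b)$ and then applying the elementary relation once more to each of the two resulting terms, namely $(x-b)\binom{x}{a+1}$ (taking $c=b$ with $a$ shifted to $a+1$) and $(x-b)\binom{x}{a}$ (taking $c=b$), reduces everything to a linear combination of $\binom{x}{a+2}$, $\binom{x}{a+1}$, and $\binom{x}{a}$.

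Collecting coefficients, I expect the $\binom{x}{a+2}$ term to carry coefficient $(a+1)(a+2)$, the $\binom{x}{a+1}$ term to carry $(a+1)(a+1-b)+(a-b-1)(a+1)=2(a+1)(a-b)$, and the $\binom{x}{a}$ term to carry $(a-b-1)(a-b)$; dividing by $2$ and recognizing that $\tfrac12(a+1)(a+2)=\binom{a+2}{2}$ and $\tfrac12(a-b)(a-b-1)=\binom{a-b}{2}$ then yields the stated formula. The particular case is immediate on setting $b=1$, since $(a+1)(a-1)=a^2-1$. The only delicate point is the bookkeeping of the middle coefficient, where the two contributions to $\binom{x}{a+1}$ must be summed correctly after factoring out $(a+1)$; beyond this routine algebra there is no genuine obstacle.
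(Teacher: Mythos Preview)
Your proof is correct and proceeds by the same elementary polynomial manipulation as the paper; the only cosmetic difference is that the paper divides both sides by $\binom{x}{a}$ (using $\binom{x}{a+1}=\frac{x-a}{a+1}\binom{x}{a}$ and $\binom{x}{a+2}=\frac{(x-a)(x-a-1)}{(a+2)(a+1)}\binom{x}{a}$) and verifies the resulting quadratic equals $\binom{x-b}{2}$ in one line, whereas you build the expansion by iterating the absorption identity $(x-c)\binom{x}{a}=(a+1)\binom{x}{a+1}+(a-c)\binom{x}{a}$ twice. Your bookkeeping of the middle coefficient is accurate.
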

\begin{proof}
Note that
$$\binom{a+2}{2}\frac{(x-a)(x-a-1)}{(a+2)(a+1)}+(a+1)(a-b)\frac{x-a}{a+1}+\binom{a-b}{2}=\binom{x-b}{2}.$$
This yields the desired result.
\end{proof}

We can now conclude the main result of this paper from the discussion above.
\begin{theorem}\label{thm01}
Let $k$ be a given nonnegative integer. Then for $n\geq 1$,
the numbers $\LS(n+k,n)$ can be expanded in the binomial basis as
\begin{equation}\label{LSNK-binomial}
\LS(n+k,n)=2^k\sum_{i=k+2}^{3k}\gamma(k,i)\binom{n+k+1}{i},
\end{equation}
where the coefficients $\gamma(k,i)$ are all positive integers for $k+2\leq i\leq 3k$ and satisfy the recurrence relation
\begin{equation}\label{LSNK-recu}
\gamma(k+1,i)=\binom{i-k-1}{2}\gamma(k,i-1)+(i-1)(i-k-2)\gamma(k,i-2)+\binom{i-1}{2}\gamma(k,i-3),
\end{equation}
with the initial conditions $\gamma(0,0)=1$, $\gamma(0,i)=\gamma(i,0)=0$ for $i\neq 0$.
Let $\gamma_k(x)=\sum_{i=k+2}^{3k}\gamma(k,i)x^i$. Then the polynomials $\gamma_k(x)$ satisfy the recurrence relation
\begin{equation}\label{LSNK-recu02}
\gamma_{k+1}(x)=\left(\frac{k(k+1)}{2}-kx+x^2\right)x\gamma_k(x)-
(k+(k-2)x-2x^2)x^2\gamma_k'(x)+\frac{(1+x)^2x^3}{2}\gamma_k''(x),
\end{equation}
with the initial conditions $\gamma_0(x)=1,\gamma_1(x)=x^3$ and $\gamma_2(x)=x^4+8x^5+10x^6$.
\end{theorem}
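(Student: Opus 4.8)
The plan is to set up an induction on $k$ driven by the summation formula of Lemma~\ref{lemma02}. Writing $F_k(n):=\LS(n+k,n)/2^k$, the $k$ nested sums in~\eqref{LSNK01} telescope into the single recursion
\[
F_k(n)=\sum_{t=1}^{n}\binom{t+1}{2}F_{k-1}(t),\qquad F_0(n)=1,
\]
because the inner $k-1$ sums with upper index $t_k$ are precisely $F_{k-1}(t_k)$. So the whole problem reduces to understanding how the operator $G\mapsto\sum_{t=1}^{n}\binom{t+1}{2}G(t)$ acts on the binomial basis. The base cases are immediate: $F_0(n)=1$ (so $\gamma(0,0)=1$, matching the seed) and $F_1(n)=\sum_{t=1}^n\binom{t+1}{2}=\binom{n+2}{3}$ by hockey-stick, giving $\gamma_1(x)=x^3$.

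For the inductive step I would take the hypothesis $F_{k-1}(t)=\sum_i\gamma(k-1,i)\binom{t+k}{i}$ and substitute it into the recursion. To evaluate $\sum_{t=1}^n\binom{t+1}{2}\binom{t+k}{i}$ I would apply Lemma~\ref{lemma03} with $x=t+k$, $a=i$, $b=k-1$, rewriting $\binom{t+1}{2}\binom{t+k}{i}$ as a combination of $\binom{t+k}{i+2}$, $\binom{t+k}{i+1}$, $\binom{t+k}{i}$, and then use $\sum_{t=1}^n\binom{t+k}{j}=\binom{n+k+1}{j+1}-\binom{k+1}{j+1}$. The decisive simplification is that every boundary correction vanishes: the smallest index produced is $j+1=k+2>k+1$, so each $\binom{k+1}{j+1}=0$. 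Collecting the coefficient of $\binom{n+k+1}{i}$ then yields
\[
\gamma(k,i)=\binom{i-k}{2}\gamma(k-1,i-1)+(i-1)(i-k-1)\gamma(k-1,i-2)+\binom{i-1}{2}\gamma(k-1,i-3),
\]
which is exactly~\eqref{LSNK-recu} after replacing $k$ by $k-1$, and which simultaneously establishes~\eqref{LSNK-binomial}.

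Next I would settle the support and positivity by induction from this recurrence, taking $k=1$ as base (since $\gamma(0,\cdot)$ has the exceptional support $\{0\}$). Tracking the shifts $i-1,i-2,i-3$ against the inductive support $[k+1,3k-3]$ of $\gamma(k-1,\cdot)$ shows $\gamma(k,i)=0$ outside $[k+2,3k]$, and that all three coefficients $\binom{i-k}{2}$, $(i-1)(i-k-1)$, $\binom{i-1}{2}$ are nonnegative there. For strict positivity one checks that at least one term survives at each $i$: the term $\binom{i-k}{2}\gamma(k-1,i-1)$ handles $k+2\le i\le 3k-2$ (in particular $\gamma(k,k+2)=\gamma(k-1,k+1)=1$), while $i=3k-1$ and $i=3k$ are carried by the second and third terms respectively.

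Finally, for the differential recurrence~\eqref{LSNK-recu02} I would pass to generating functions via the Euler operator $\theta=x\frac{d}{dx}$, using $\sum_j j\,\gamma(k,j)x^j=x\gamma_k'(x)$ and $\sum_j j^2\gamma(k,j)x^j=x\gamma_k'(x)+x^2\gamma_k''(x)$. Multiplying the three terms of~\eqref{LSNK-recu} by $x^i$ and re-indexing by $j=i-1,i-2,i-3$ introduces the factors $x,x^2,x^3$; expanding the quadratics $\binom{j-k}{2}$, $(j+1)(j-k)$, $\binom{j+2}{2}$ and regrouping by $\gamma_k,\gamma_k',\gamma_k''$ reproduces the stated coefficients $\bigl(\tfrac{k(k+1)}{2}-kx+x^2\bigr)x$, $-(k+(k-2)x-2x^2)x^2$, $\tfrac{(1+x)^2x^3}{2}$, with $\gamma_0,\gamma_1,\gamma_2$ checked directly. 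The main obstacle is not any single computation but the bookkeeping of index ranges: confirming that the hockey-stick boundary terms vanish and that the support of $\gamma(k,\cdot)$ is exactly $[k+2,3k]$ with strict positivity at every index. The degenerate case $k=0$, where~\eqref{LSNK-binomial} reads $\LS(n,n)=1$ (consistent with the seed $\gamma(0,0)=1$ and $\binom{n+1}{0}=1$), I would dispose of separately before launching the induction from $k=1$.
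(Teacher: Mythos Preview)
Your approach is essentially the same as the paper's: induction on $k$ via Lemma~\ref{lemma02} (recast as the single recursion $F_k(n)=\sum_{t}\binom{t+1}{2}F_{k-1}(t)$), expansion of $\binom{t+1}{2}\binom{t+k}{i}$ by Lemma~\ref{lemma03}, hockey-stick summation, and then passage to the differential recurrence by multiplying by $x^i$ and summing. You are in fact more thorough than the paper, which does not address the vanishing of the hockey-stick boundary terms or give any argument for the strict positivity and exact support $[k+2,3k]$ of $\gamma(k,\cdot)$; your inductive treatment of those points fills a genuine gap in the original.
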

\begin{proof}
We prove~\eqref{LSNK-binomial} by induction on $k$.
It is clear that $$\LS(n,n)=1=\binom{n+1}{0}.$$
When $k=1$, by using the {\it Chu Shih-Chieh's identity}
$$\binom{n+1}{k+1}=\sum_{i=k}^n\binom{i}{k},$$
we obtain
$$\sum_{t_1=1}^{n}\binom{t_1+1}{2}=\binom{n+2}{3},$$
and so~\eqref{LSnk01} is established.
When $k=2$, it follows from Lemma~\ref{lemma02} that
\begin{align*}
\LS(n+2,n)&=4\sum_{t_2=1}^{n}\binom{t_2+1}{2}\sum_{t_1=1}^{t_2}\binom{t_1+1}{2}\\
&=4\sum_{t_2=1}^{n}\binom{t_2+1}{2}\binom{t_2+2}{3}.
\end{align*}
Setting $x=t_2+2$ and $a=3$ in Lemma~\ref{lemma03}, we get
\begin{align*}
\LS(n+2,n)&=4\sum_{t_2=1}^{n}\left(10\binom{t_2+2}{5}+8\binom{t_2+2}{4}+\binom{t_2+2}{3}\right)\\
&=4\left(10\binom{n+3}{6}+8\binom{n+3}{5}+\binom{n+3}{4}\right),
\end{align*}
which yields~\eqref{LSnk02}. Along the same lines, it is not hard to verify that
\begin{align*}
\LS(n+3,n)&=8\sum_{t_3=1}^{n}\binom{t_3+1}{2}\left(10\binom{t_3+3}{6}+8\binom{t_3+3}{5}+\binom{t_3+3}{4}\right)\\
&=8\left(280\binom{n+4}{9}+448\binom{n+4}{8}+219\binom{n+4}{7}+34\binom{n+4}{6}+\binom{n+4}{5}\right).
\end{align*}
Hence the formula~\eqref{LSNK-binomial} holds for $k=0,1,2,3$, so we proceed to the inductive step.
For $k\geq 3$, assume that
\begin{equation*}\label{LSNK02}
\LS(n+k,n)=2^k\sum_{i=k+2}^{3k}\gamma(k,i)\binom{n+k+1}{i}.
\end{equation*}
It follows from Lemma~\ref{lemma02} that
\begin{align*}
\LS(n+k+1,n)&=2^{k+1}\sum_{t_{k+1}=1}^{n}\binom{t_{k+1}+1}{2}\sum_{i=k+2}^{3k}\gamma(k,i)\binom{t_{k+1}+k+1}{i}
\end{align*}
By using Lemma~\ref{lemma03}, it is routine to verify that the coefficients $\gamma(k,i)$ satisfy the recurrence relation~\eqref{LSNK-recu}, and so~\eqref{LSNK-binomial} is established for general $k$.
Multiplying both sides of~\eqref{LSNK-recu} by $x^i$ and summing for all $i$,
we immediately get~\eqref{LSNK-recu02}.
\end{proof}

In~\cite{Andrews11}, Andrews et al. introduced the {\it (unsigned) Legendre-Stirling numbers $\Lc(n,k)$ of the first kind}, which may be defined by the recurrence relation
$$\Lc(n,k)=\Lc(n-1,k-1)+n(n-1)\Lc(n-1,k),$$
with the initial conditions $\Lc(n,0)=\delta_{n,0}$ and $\Lc(0,n)=\delta_{0,n}$.
Let $f_k(n)=\LS(n+k,n)$.
According to Egge~\cite[Eq.~(23)]{Egge10}, we have
\begin{equation}\label{First-second-Legendre}
\Lc(n-1,n-k-1)=(-1)^kf_k(-n)
\end{equation}
for $k\geq 0$. For $m,k\in \N$, we define $$\binom{-m}{k}=\frac{(-m)(-m-1)\cdots (-m-k+1)}{k!}.$$
Combining~\eqref{LSNK-binomial} and~\eqref{First-second-Legendre}, we immediately get the following result.
\begin{corollary}
Let $k$ be a given nonnegative integer.
For $n\geq 1$,
the numbers $\Lc(n-1,n-k-1)$ can be expanded in the binomial basis as
\begin{equation}
\Lc(n-1,n-k-1)=(-1)^k2^k\sum_{i=k+2}^{3k}\gamma(k,i)\binom{-n+k+1}{i},
\end{equation}
where the coefficients $\gamma(k,i)$ are defined by~\eqref{LSNK-recu}.
\end{corollary}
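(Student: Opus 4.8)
The plan is to obtain the corollary as a direct consequence of the two ingredients assembled just before its statement: the binomial-basis expansion \eqref{LSNK-binomial} of $\LS(n+k,n)$ furnished by Theorem~\ref{thm01}, and Egge's reciprocity relation \eqref{First-second-Legendre}, which expresses $\Lc(n-1,n-k-1)$ in terms of $f_k(-n)$. Since $f_k(n)=\LS(n+k,n)$ and \eqref{First-second-Legendre} reads $\Lc(n-1,n-k-1)=(-1)^kf_k(-n)$, the entire content of the proof reduces to evaluating the right-hand side of \eqref{LSNK-binomial} at $-n$ and multiplying by $(-1)^k$.

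First I would upgrade \eqref{LSNK-binomial} from an identity valid for positive integers $n$ to a genuine identity of polynomials in $n$. The left-hand side $f_k(n)=\LS(n+k,n)$ is, by Egge's theorem quoted in the introduction, a polynomial in $n$ of degree $3k$; the right-hand side $2^k\sum_{i=k+2}^{3k}\gamma(k,i)\binom{n+k+1}{i}$ is manifestly a polynomial in $n$, because each $\binom{n+k+1}{i}=\tfrac{1}{i!}(n+k+1)(n+k)\cdots(n+k+2-i)$ is a polynomial of degree $i\le 3k$. Two polynomials that agree at all integers $n\ge 1$ agree identically, so \eqref{LSNK-binomial} holds as an identity over $\mathbb{Z}$ and may legitimately be specialized at $n\mapsto -n$.

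Next I would substitute. Replacing $n$ by $-n$ in the polynomial identity gives
\[
f_k(-n)=2^k\sum_{i=k+2}^{3k}\gamma(k,i)\binom{-n+k+1}{i},
\]
where $\binom{-n+k+1}{i}$ denotes the value of the polynomial $\binom{x}{i}$ at $x=-n+k+1$; this is precisely the meaning of the symbol $\binom{-m}{k}$ fixed in the excerpt just before the corollary, so no notational conflict arises. Multiplying through by $(-1)^k$ and invoking \eqref{First-second-Legendre} yields
\[
\Lc(n-1,n-k-1)=(-1)^kf_k(-n)=(-1)^k2^k\sum_{i=k+2}^{3k}\gamma(k,i)\binom{-n+k+1}{i},
\]
which is the asserted formula, with the $\gamma(k,i)$ being exactly the coefficients governed by the recurrence \eqref{LSNK-recu} established in Theorem~\ref{thm01}.

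I do not anticipate a serious obstacle: once \eqref{LSNK-binomial} and \eqref{First-second-Legendre} are in hand, the corollary is a formal substitution. The one point requiring genuine care is the polynomiality argument in the second step --- namely, that \eqref{LSNK-binomial}, proved combinatorially only for $n\ge 1$, persists as an identity in the indeterminate $n$ so that the negative specialization is justified. This is secured by Egge's degree-$3k$ result together with the elementary fact that a polynomial of degree $3k$ is determined by its values at infinitely many integers. I would state this hinge explicitly rather than leave it implicit, since it is exactly the step on which the reciprocity argument turns.
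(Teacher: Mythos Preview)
Your proof is correct and follows essentially the same approach as the paper, which simply states that the corollary follows by combining \eqref{LSNK-binomial} with \eqref{First-second-Legendre}. Your explicit justification of the polynomiality step (needed to specialize $n\mapsto -n$) is a worthwhile addition that the paper leaves implicit.
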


It follows from~\eqref{LSNK-recu02} that
\begin{align*}
\gamma(k+1,k+3)&=\left(\frac{k(k+1)}{2}-k(k+2)+\frac{(k+2)(k+1)}{2}\right)\gamma(k,k+2),\\
\gamma(k+1,3k+3)&=\left(1+6k+\frac{3k(3k-1)}{2}\right)\gamma(k,3k),\\
\gamma_{k+1}(-1)&=-\left(\frac{k(k+1)}{2}+k+1\right)\gamma_k(-1).
\end{align*}
Since $\gamma(1,3)=1$ and $\gamma_1(-1)=-1$, it is easy to verify that for $k\geq 1$, we have
$$\gamma(k,k+2)=1,~\gamma(k,3k)=\frac{(3k)!}{k!(3!)^k},~\gamma_{k}(-1)=(-1)^k\frac{(k+1)!k!}{2^k}.$$
It should be noted that the number $\gamma(k,3k)$ is
the number of partitions of $\{1,2,\ldots,3k\}$ into blocks of size 3 (see~\cite[A025035]{Sloane}),
and the number $\frac{(k+1)!k!}{2^k}$ is the product of first $k$ positive triangular numbers (see~\cite[A006472]{Sloane}).
Moreover, if the number $\LS(n+k,n)$ is viewed as a polynomial in $n$, then its degree is $3k$, which is implied by the quantity
$\binom{n+k+1}{3k}$. Furthermore, the leading coefficient of $\LS(n+k,n)$ is given by
$$2^k\gamma(k,3k)\frac{1}{(3k)!}=2^k\frac{(3k)!}{k!(3!)^k}\frac{1}{(3k)!}=\frac{1}{k!3^k},$$
which yields~\cite[Theorem 3.1]{Egge10}.
\section{Grammatical interpretations of Jacobi-Stirling numbers of both kinds}\label{Section03}
In this section, a context-free grammar is in the sense of Chen~\cite{Chen93}:
for an alphabet $A$, let $\mathbb{Q}[[A]]$ be the rational commutative ring of formal power
series in monomials formed from letters in $A$. A context-free grammar over
A is a function $G: A\rightarrow \mathbb{Q}[[A]]$ that replace a letter in $A$ by a formal function over $A$.
The formal derivative $D$ is a linear operator defined with respect to a context-free grammar $G$. More precisely,
the derivative $D=D_G$: $\mathbb{Q}[[A]]\rightarrow \mathbb{Q}[[A]]$ is defined as follows:
for $x\in A$, we have $D(x)=G(x)$; for a monomial $u$ in $\mathbb{Q}[[A]]$, $D(u)$ is defined so that $D$ is a derivation,
and for a general element $q\in\mathbb{Q}[[A]]$, $D(q)$ is defined by linearity.
The reader is referred to~\cite{Chen17,Ma1801} for recent progress on this subject.

Let $[n]=\{1,2,\ldots,n\}$. The Stirling number $\Stirling{n}{k}$ of the second kind is the
number of ways to partition $[n]$ into $k$ blocks.
Chen~\cite[Eq. 4.8]{Chen93} showed that if $G=\{x\rightarrow xy, y\rightarrow y\}$,
then
\begin{equation*}
D^n(x)=x\sum_{k=0}^n\Stirling{n}{k}y^k.
\end{equation*}
Let $\msn$ be the symmetric group of all permutations of $[n]$. Let $\cyc(\pi)$ be the number of cycles of $\pi$.
The (unsigned) {\it Stirling number of the first kind} is defined by $$\stirling{n}{k}=\#\{\pi\in\msn\mid \cyc(\pi)=k\}.$$
From~\cite[Eq.~4.8]{Ma131}, we see that if $G=\{x\rightarrow xy, y\rightarrow yz,z\rightarrow z^2\}$,
then
\begin{equation*}
D^n(x)=x\sum_{k=0}^n\stirling{n}{k}y^kz^{n-k}.
\end{equation*}

According to~\cite[Theorem 4.1]{Everitt07}, the {\it Jacobi-Stirling number $\JS_n^k(z)$ of the second kind} is defined by
\begin{equation}\label{xnJsnkz}
x^n=\sum_{k=0}^n\JS_n^k(z)\prod_{i=0}^{k-1}(x-i(z+i)).
\end{equation}
It follows from~\eqref{xnJsnkz} that
the numbers $\JS_n^k(z)$ satisfy the recurrence relation
\begin{align*}
\JS_n^k(z)=\JS_{n-1}^{k-1}(z)+k(k+z)\JS_{n-1}^k(z),
\end{align*}
with the initial conditions $\JS_n^0(z)=\delta_{n,0}$ and $\JS_0^k(z)=\delta_{0,k}$. It is clear that $\JS_n^k(1)=\LS(n,k)$.
Following~\cite[Eq.~(1.3), Eq.~(1.5)]{Zeng10}, {\it the (unsigned) Jacobi-Stirling number $\Jc_n^k(z)$ of the first kind} is defined by
\begin{equation*}\label{Jcnk-def}
\prod_{i=0}^{n-1}\left(x+i(z+i)\right)=\sum_{k=0}^n\Jc_n^k(z)x^k,
\end{equation*}
and the numbers $\Jc_n^k(z)$ satisfy the following recurrence relation
\begin{align*}
\Jc_n^k(z)=\Jc_{n-1}^{k-1}(z)+(n-1)(n-1+z)\Jc_{n-1}^k(z),
\end{align*}
with the initial conditions $\Jc_n^0(z)=\delta_{n,0}$ and $\Jc_0^k(z)=\delta_{k,0}$.
In particular, $\Jc_n^k(1)=\Lc(n,k)$.

Properties and combinatorial interpretations of the Jacobi-Stirling numbers of both kinds were extensively studied in~\cite{Andrews13,Zeng10,Gessel12,Lin14,Merca15,Mongelli12,Mongelli1202}.
The Jacobi-Stirling numbers share many similar properties to those of the
Stirling numbers. A question arises immediately: are there grammatical
descriptions of the Jacobi-Stirling numbers of both kinds? In this section, we give the answer.

As a variant of the $\CLS$-sequence,
we now introduce a marked scheme for Legendre-Stirling partitions.
Given a Legendre-Stirling partition $\sigma=B_1B_2\cdots B_kB_0\in\LSS(n,k)$, where $B_0$ is the zero box of $\sigma$. We mark the box vector $(B_1,B_2,\ldots,B_k)$ by the label $a_k$.
We mark any box pair $(B_i,B_j)$ by a label $b$ and mark any box pair $(B_s,B_0)$ by a label $c$, where $1\leq i<j\leq k$ and $1\leq s\leq k$.
Let $\sigma'$ denote the Legendre-Stirling partition that generated from $\sigma$ by inserting $n+1$ and $\overline{n+1}$.
If $n+1$ and $\overline{n+1}$ are in the same box, then
$$\sigma'=B_1B_2\cdots B_kB_{k+1}B_0,$$
where $B_{k+1}=\{n+1,\overline{n+1}\}$. This case corresponds to the operator $a_k\rightarrow a_{k+1}b^kc$.
If $n+1$ and $\overline{n+1}$ are in different boxes, then
we distinguish two cases:
\begin{enumerate}
  \item [\rm ($i$)] Given a box pair $(B_i,B_j)$, where $1\leq i<j\leq k$. We can put $n+1$ (resp. $\overline{n+1}$) into the box $B_i$ and put $\overline{n+1}$ (resp. $n+1$) into the box $B_j$. This case corresponds to the operator $b\rightarrow 2b$.
   \item [\rm ($ii$)] Given a box pair $(B_i,B_0)$, where $1\leq i\leq k$. We can put $n+1$ (resp. $\overline{n+1}$) into the box $B_i$ and put $\overline{n+1}$ (resp. $n+1$) into the zero box $B_0$. Moreover, we mark any barred entry in the zero box $B_0$ by a label $z$. This case corresponds to the operator $c\rightarrow (1+z)c$.
\end{enumerate}

Let $A=\{a_0,a_1,a_2,a_3,\ldots,b,c\}$ be a set of alphabet.
Using the above marked scheme, it is natural to
consider the following grammars:
\begin{equation}\label{Jsnk-grammar}
G_k=\{a_0\rightarrow a_1c,a_1\rightarrow a_2bc,\ldots,a_{k-1}\rightarrow a_{k}b^{k-1}c,b\rightarrow 2b,c\rightarrow (1+z)c\}
\end{equation}
where $k\geq 1$.

\begin{theorem}\label{thm-grammar01}
Let $G_k$ be the grammars defined by~\eqref{Jsnk-grammar}. Then we have
\begin{equation*}
D_nD_{n-1}\cdots D_1(a_0)=\sum_{k=1}^n\JS_n^k(z)a_kb^{\binom{k}{2}}c^k.
\end{equation*}
\end{theorem}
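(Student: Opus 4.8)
The plan is to prove the identity by induction on $n$, reducing the entire computation to the defining recurrence $\JS_n^k(z)=\JS_{n-1}^{k-1}(z)+k(k+z)\JS_{n-1}^k(z)$ recorded after~\eqref{xnJsnkz}. The base case $n=1$ is immediate: since $G_1$ sends $a_0$ to $a_1c$, we have $D_1(a_0)=a_1c$, and this agrees with $\JS_1^1(z)\,a_1b^{\binom{1}{2}}c^1=a_1c$ because $\JS_1^1(z)=1$ and $\binom{1}{2}=0$.

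For the inductive step I assume $D_{n-1}\cdots D_1(a_0)=\sum_{k=1}^{n-1}\JS_{n-1}^k(z)\,a_kb^{\binom{k}{2}}c^k$ and apply the derivation $D_n=D_{G_n}$ term by term. Every monomial here carries a letter $a_k$ with $1\le k\le n-1$, so the rule $a_k\to a_{k+1}b^kc$ needed to differentiate it is exactly the rule of $G_n$ indexed by $j=k\le n-1$; checking that the finite grammar $G_n$ contains precisely the rules required at this stage is the one bookkeeping point that must be verified. Combining this with $D_n(b)=2b$ and $D_n(c)=(1+z)c$ via the Leibniz rule yields the single-monomial derivative
\[
D_n\!\left(a_kb^{\binom{k}{2}}c^k\right)=a_{k+1}b^{\binom{k+1}{2}}c^{k+1}+k(k+z)\,a_kb^{\binom{k}{2}}c^k.
\]

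The crux is the algebraic simplification producing this formula, and it rests on three elementary identities. First, $b^k\cdot b^{\binom{k}{2}}=b^{\binom{k+1}{2}}$, since $\binom{k}{2}+k=\binom{k+1}{2}$, which fixes the off-diagonal $a_{k+1}$ term. Second, $D_n(b^{\binom{k}{2}})=2\binom{k}{2}b^{\binom{k}{2}}=k(k-1)b^{\binom{k}{2}}$, the contribution of the $\binom{k}{2}$ box pairs each splitting in two ways. Third, $D_n(c^k)=k(1+z)c^k$, the contribution of the $k$ zero-box pairs according to whether a barred or unbarred entry lands in $B_0$. Adding the two diagonal terms gives $k(k-1)+k(1+z)=k(k+z)$, which is exactly the recurrence weight; this is precisely where the $b\to 2b$ and $c\to(1+z)c$ branchings must conspire to rebuild the Jacobi-Stirling coefficient, and the marked scheme stated before the theorem is what guarantees that these two operations genuinely enumerate the admissible insertions of $n+1$ and $\overline{n+1}$.

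Finally, I substitute the single-monomial derivative into the inductive hypothesis, reindex the off-diagonal terms by $m=k+1$ while keeping the diagonal terms at $m=k$, and collect the coefficient of $a_mb^{\binom{m}{2}}c^m$ as $\JS_{n-1}^{m-1}(z)+m(m+z)\JS_{n-1}^m(z)$. By the recurrence this equals $\JS_n^m(z)$, with the extreme cases $m=1$ and $m=n$ covered by the conventions $\JS_{n-1}^0(z)=\delta_{n-1,0}$ and $\JS_{n-1}^n(z)=0$; this completes the induction and hence the proof.
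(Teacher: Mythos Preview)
Your proof is correct and follows essentially the same induction-on-$n$ argument as the paper: apply the next derivation to the inductive hypothesis, compute the derivative of each monomial $a_kb^{\binom{k}{2}}c^k$ via the Leibniz rule, and recognize the coefficient $k(k-1)+k(1+z)=k(k+z)$ as the Jacobi--Stirling recurrence weight. If anything, your write-up is slightly more careful than the paper's, since you explicitly verify that the grammar $G_n$ actually contains the rule for each $a_k$ with $k\le n-1$ that appears at the inductive step.
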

\begin{proof}
Note that $D_1(a_0)=a_1c$ and $D_2D_1(a_0)=a_2bc^2+(1+z)a_1c$. Thus the result holds for
$n=1,2$. For $m\geq 2$, we define $P_m^k(z)$ by
\begin{equation*}
D_mD_{m-1}\cdots D_1(a_0)=\sum_{k=1}^nP_m^k(z)a_kb^{\binom{k}{2}}c^k.
\end{equation*}
We proceed by induction.
Consider the case $n=m+1$.
Since $$D_{m+1}D_mD_{m-1}\cdots D_1(a_0)=D_{m+1}(D_mD_{m-1}\cdots D_1(a_0)),$$ it follows that
\begin{align*}
D_{m+1}D_{m}\cdots D_1(a_0)&=D_{m+1}\left(\sum_{k=1}^nP_m^k(z)a_kb^{\binom{k}{2}}c^k\right)\\
&=\sum_{k=1}^nP_m^k(z)\left(a_{k+1}b^{\binom{k+2}{2}}c^{k+1}+k(k-1)a_kb^{\binom{k}{2}}c^k+(1+z)ka_kb^{\binom{k}{2}}c^k\right).
\end{align*}
Therefore, we obtain
$P_{m+1}^k(z)=P_m^{k-1}(z)+k(k+z)P_m^k(z)$.
Since the numbers $P_n^k(z)$ and $\JS_n^k(z)$ satisfy the same recurrence relation and initial conditions, so they agree.
\end{proof}

Combining the marked scheme for Legendre-Stirling partitions and Theorem~\ref{thm-grammar01}, it is clear that for $n\geq k$, the number
$\JS_n^k(z)$ is a polynomial of degree $n-k$ in $z$, and the coefficient $z^i$ of $\JS_n^k(z)$ is the number of Legendre-Stirling partitions in $\LSS(n,k)$ with exactly
$i$ barred entries in the zero box, which gives a proof of~\cite[Theorem 2]{Zeng10}.

We end this section by giving the following result.
\begin{theorem}\label{thm-Jcnk}
Let $A=\{a,b_0,b_1,\ldots\}$ be a set of alphabet.
Let $G_k$ be the grammars defined by
\begin{equation*}\label{Jcnk-grammar}
G_k=\{a\rightarrow (k-1)(k-1+z)a,b_{0}\rightarrow b_{1},b_1\rightarrow b_2,\ldots,b_{k-1}\rightarrow b_k\},
\end{equation*}
where $k\geq 1$. Then we have
\begin{equation*}
D_n D_{n-1}\cdots D_1(ab_0)=a\sum_{k=1}^n\Jc_n^k(z)b_k.
\end{equation*}
\end{theorem}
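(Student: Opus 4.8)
The plan is to mirror the inductive argument used for Theorem~\ref{thm-grammar01}: I would show that the $b_k$-coefficients on the left-hand side satisfy the same recurrence and the same initial data as the Jacobi-Stirling numbers $\Jc_n^k(z)$ of the first kind, and then invoke uniqueness. First I would introduce coefficients $P_n^k(z)$ by writing
\begin{equation*}
D_n D_{n-1}\cdots D_1(ab_0)=a\sum_{k=1}^n P_n^k(z)\,b_k,
\end{equation*}
where $D_j=D_{G_j}$ denotes the derivative associated with the grammar $G_j$. The base case is a direct computation: since $G_1$ sends $a\to (1-1)(1-1+z)a=0$ and $b_0\to b_1$, the product rule gives $D_1(ab_0)=ab_1$, so $P_1^1(z)=1$ and $P_1^k(z)=0$ otherwise, matching $\Jc_1^1(z)=1$.

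The key structural observation is that every grammar $G_j$ acts on the letter $a$ only by rescaling, $a\to (j-1)(j-1+z)a$, so $a$ never spawns a $b$-letter and never disappears from a monomial; consequently $a$ factors out of every intermediate expression and it suffices to track the $b$-part. A second point to record is that after $n$ applications the largest index of any $b$-letter occurring is exactly $n$, while $G_{n+1}$ contains precisely the rules $b_0\to b_1,\ldots,b_n\to b_{n+1}$. Hence $G_{n+1}$ supplies a production for every $b_k$ appearing in $D_n\cdots D_1(ab_0)$, so applying $D_{n+1}$ is well defined with no missing rules. This compatibility of the grammar indexing with the degree growth is the only place where care is needed; I expect it to be the main thing to verify rather than a genuine obstacle.

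For the inductive step I would apply $D_{n+1}=D_{G_{n+1}}$ to $a\sum_{k=1}^n P_n^k(z)b_k$. Using $D_{n+1}(a)=n(n+z)a$ and $D_{n+1}(b_k)=b_{k+1}$ for $1\le k\le n$, the product rule yields
\begin{equation*}
D_{n+1}D_n\cdots D_1(ab_0)=a\sum_{k=1}^n n(n+z)P_n^k(z)b_k+a\sum_{k=1}^n P_n^k(z)b_{k+1}.
\end{equation*}
Reindexing the second sum and reading off the coefficient of $b_k$ gives
\begin{equation*}
P_{n+1}^k(z)=P_n^{k-1}(z)+n(n+z)P_n^k(z).
\end{equation*}
This is exactly the recurrence $\Jc_{n+1}^k(z)=\Jc_n^{k-1}(z)+n(n+z)\Jc_n^k(z)$ for the Jacobi-Stirling numbers of the first kind, namely the recurrence displayed earlier with $n$ replaced by $n+1$. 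Since $P_n^k(z)$ and $\Jc_n^k(z)$ obey the same recurrence and the same initial conditions, they coincide, which completes the induction and establishes the theorem.
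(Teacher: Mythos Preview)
Your proof is correct and follows essentially the same inductive argument as the paper: define coefficients via the expansion, compute the base case, apply $D_{n+1}$ using the product rule to extract the recurrence $P_{n+1}^k(z)=P_n^{k-1}(z)+n(n+z)P_n^k(z)$, and match it with the defining recurrence for $\Jc_n^k(z)$. Your added remarks on why $a$ factors out and why $G_{n+1}$ supplies a rule for every $b_k$ appearing after $n$ steps are welcome clarifications that the paper leaves implicit.
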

\begin{proof}
Note that $D_1(ab_0)=ab_1$ and $D_2D_1(ab_0)=(1+z)ab_1+ab_2$.
Hence the result holds for $n=1,2$. For $m\geq 2$, we define $Q_m^k(z)$ by
$D_m D_{m-1}\cdots D_1(ab_0)=a\sum_{k=1}^mQ_m^k(z)b_k$.
We proceed by induction.
Consider the case $n=m+1$.
Since $$D_{m+1}D_mD_{m-1}\cdots D_1(ab_0)=D_{m+1}(D_mD_{m-1}\cdots D_1(ab_0)),$$ it follows that
\begin{align*}
D_{m+1}D_{m}\cdots D_1(ab_0)&=D_{m+1}\left(a\sum_{k=1}^mQ_m^k(z)b_k\right)\\
&=a\sum_{k=1}^mQ_m^k(z)m(m+z)b_k+a\sum_{k=1}^mQ_m^kb_{k+1}.
\end{align*}
Therefore, we obtain
$Q_{m+1}^k(z)=Q_m^{k-1}(z)+m(m+z)Q_m^k(z)$.
Since the numbers $Q_n^k(z)$ and $\Jc_n^k(z)$ satisfy the same recurrence relation and initial conditions, so they agree.
\end{proof}
\section{Concluding remarks}
Note that the Jacobi-Stirling numbers are polynomial refinements of the Legendre-Stirling numbers.
It would be interesting to explore combinatorial expansions of Jacobi-Stirling numbers of both kinds.

Let $\gamma_k(x)$ be the polynomials defined by~\eqref{LSNK-recu02}.
We end our paper by proposing the following.
\begin{conjecture}
For any $k\geq 1$, the polynomial $\gamma_k(x)$ has only real zeros.
Set $$\gamma_k(x)=\gamma(k,3k)x^{k+2}\prod_{i=1}^{2k-2}(x-r_i),~\gamma_{k+1}(x)=\gamma(k+1,3k+3)x^{k+3}\prod_{i=1}^{2k}(x-s_i),$$
where $r_{2k-2}<r_{2k-3}<\cdots<r_2<r_1$ and $s_{2k}<s_{2k-1}<s_{2k-2}<\cdots<s_2<s_1$. Then
\begin{equation*}\label{zeros}
s_{2k}<r_{2k-2}<s_{2k-1}<r_{2k-3}<s_{2k-2}<\cdots<r_k<s_{k+1}<s_k<r_{k-1}<\cdots<s_2<r_1<s_1,
\end{equation*}
in which the zeros $s_{k+1}$ and $s_k$ of $\gamma_{k+1}(x)$ are continuous appearance, and the other zeros of $\gamma_{k+1}(x)$ separate the zeros of $\gamma_{k}(x)$.
\end{conjecture}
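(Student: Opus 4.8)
The plan is to prove real-rootedness and the interlacing pattern simultaneously by induction on $k$, driven by an operator factorization of the recurrence~\eqref{LSNK-recu02}. A direct computation shows that the second-order operator on the right of~\eqref{LSNK-recu02} splits into a composition of two nearly identical first-order operators:
\begin{equation*}
\gamma_{k+1}(x)=\frac{x}{2}\,L_{k+1}\bigl(L_k(\gamma_k(x))\bigr),\qquad L_c:=x(1+x)\frac{d}{dx}+(x-c).
\end{equation*}
One verifies this by checking the three coefficient identities, using the factorizations $-(k+(k-2)x-2x^2)x^2=x^2(1+x)(2x-k)$ for the coefficient of $\gamma_k'$ and $(\tfrac{k(k+1)}{2}-kx+x^2)x=\tfrac12 x\bigl[(x-k-1)(x-k)+x(1+x)\bigr]$ for the coefficient of $\gamma_k$; the step $\gamma_1\mapsto\gamma_2$ already exhibits the factorization. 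Since $\gamma_k(x)=x^{k+2}g_k(x)$ with $\deg g_k=2k-2$ for $k\ge1$, and since $\gamma_k(-1)=(-1)^k(k+1)!\,k!/2^k\neq0$ shows $-1$ is never a zero, the real content of the conjecture is the real-rootedness of the core factor $g_k$ together with the sharper localization of all its zeros in the open interval $(-1,0)$, a strengthening I would carry through the induction.

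The key lemma is that each $L_c$, acting on polynomials divisible by $x^{k+2}$, is a degree-raising operator preserving real-rootedness, interlacing, and localization in $(-1,0)$. Writing $L_c(x^{k+2}g)=x^{k+2}\,\tilde L g$, one finds $\tilde L g=x(1+x)g'+\bigl((k+3)x+(k+2-c)\bigr)g$, whose leading coefficient is positive, whose value at $x=0$ is $(k+2-c)g(0)$ and at $x=-1$ is $-(1+c)g(-1)$. For $c\in\{k,k+1\}$ the endpoint constants $k+2-c\in\{2,1\}$ and $-(1+c)$ have exactly the signs that, via a Rolle/sign-change argument at the simple zeros of $g$, force one zero of $\tilde L g$ strictly between each pair of consecutive zeros of $g$, one more in $(\rho_{\max},0)$, and one more in $(-1,\rho_{\min})$; a degree count confirms these are all the zeros, so they are simple, lie in $(-1,0)$, and interlace those of $g$. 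Applying this twice (for $c=k$, then $c=k+1$) sends $g_k$ to $g_{k+1}$, and induction from the base $\gamma_1=x^3,\ \gamma_2=x^4+8x^5+10x^6$ yields that $\gamma_k$ has only real zeros, all nonzero ones simple and in $(-1,0)$.

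The real-rootedness and $(-1,0)$-localization should thus follow from this elementary argument; the genuine obstacle is the precise interlacing pattern, in particular the single adjacent pair $s_{k+1}<s_k$. Composing the two insertions through the intermediate polynomial $\phi_k$ (the core of $L_k\gamma_k$, of degree $2k-1$) gives standard interlacings $g_k\prec\phi_k\prec g_{k+1}$; these place, in each interval between consecutive zeros $w_i$ of $\phi_k$, exactly one zero $r_i$ of $g_k$ and one zero $s_{i+1}$ of $g_{k+1}$, but leave their relative order undetermined. The conjecture asserts $r_i>s_{i+1}$ for $i<k$ and $s_{i+1}>r_i$ for $i\ge k$, and the flip at $i=k$ is precisely what creates the adjacent pair. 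Resolving the order amounts to signing $\gamma_{k+1}(r_i)$, which by~\eqref{LSNK-recu02} at a zero $r_i$ of $\gamma_k$ equals $\tfrac12(1+r_i)^2r_i^3\gamma_k''(r_i)-(k+(k-2)r_i-2r_i^2)r_i^2\gamma_k'(r_i)$; dividing by $\gamma_k'(r_i)$ recasts this as a condition on the logarithmic-derivative sum $\gamma_k''(r_i)/\gamma_k'(r_i)=\tfrac{2(k+2)}{r_i}+\sum_{j\ne i}\tfrac{2}{r_i-r_j}$. Controlling this sum uniformly and showing that its relevant sign reverses exactly as $r_i$ crosses the midpoint $-\tfrac12$ of $(-1,0)$ is the crux; the data for $k=2,3$ suggest the zeros split as $k-1$ in $(-\tfrac12,0)$ and $k-1$ in $(-1,-\tfrac12)$, which would pin the flip at $i=k$, and this is the step I expect to be hardest.

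A cleaner route to the exact pattern would be to recognize the ladder $\gamma_{k+1}=\tfrac{x}{2}L_{k+1}L_k\gamma_k$, with $L_c$ acting on $(-1,0)$, as a shifted Jacobi-type raising structure, and to identify $g_k$ (after the affine map sending $(-1,0)$ to $[-1,1]$) with a Jacobi polynomial $P_{2k-2}^{(\alpha_k,\beta_k)}$ for parameters depending on $k$. The interlacing of zeros of Jacobi polynomials under simultaneous shifts of degree and parameters is well studied, and the phenomenon of all-but-finitely-many zeros interlacing with a bounded set of exceptional zeros matches the single adjacent pair appearing here; importing such a result would settle real-rootedness and interlacing at once. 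The price is establishing the identification, that is, solving the $k$-recurrence in closed form, which is itself nontrivial but looks plausible given the clean factorization above.
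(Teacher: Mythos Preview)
The statement you are attempting is a \emph{conjecture} in the paper, not a theorem: the authors end the paper by proposing it and give no proof whatsoever. So there is no ``paper's proof'' to compare against; any correct argument would in fact resolve an open problem.

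Your operator factorization
\[
\gamma_{k+1}(x)=\frac{x}{2}\,L_{k+1}\bigl(L_k(\gamma_k(x))\bigr),\qquad L_c=x(1+x)\frac{d}{dx}+(x-c),
\]
is correct (the three coefficient identities check out), and it is a genuinely useful observation that the paper does not contain. The ensuing sketch for real-rootedness and localization in $(-1,0)$ via the sign pattern of $\tilde L g=x(1+x)g'+\bigl((k+3)x+(k+2-c)\bigr)g$ at the zeros of $g$ and at the endpoints is standard and, once the bookkeeping is written out carefully, should go through. That would already establish the first sentence of the conjecture and strengthen it with the $(-1,0)$ localization.

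However, you do not prove the conjecture. You yourself identify the obstacle: the chain $g_k\prec\phi_k\prec g_{k+1}$ obtained from two applications of a first-order operator does not, by itself, determine the relative order of $r_i$ and $s_{i+1}$ inside each $\phi_k$-gap, and hence cannot produce the specific pattern with exactly one adjacent pair $s_{k+1}<s_k$. The proposed routes past this---signing $\gamma_{k+1}(r_i)$ via control of $\gamma_k''(r_i)/\gamma_k'(r_i)$, or identifying $g_k$ with a shifted Jacobi polynomial---are both speculative: you neither carry out the logarithmic-derivative estimate nor establish any Jacobi identification, and the heuristic that zeros split evenly about $-\tfrac12$ is asserted only from small data. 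As written, then, this is a promising partial result (real-rootedness plus a weaker interlacing) together with a plan, not a proof of the full statement. If you can complete either of the two routes you outline, that would settle the conjecture; as it stands, the hard half remains open.
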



\end{document}